\documentclass[11pt, reqno]{amsart}
\pagestyle{plain}
\usepackage{amsmath,amsthm,amssymb,mathrsfs}
\usepackage[abbrev,non-sorted-cites]{amsrefs}
\usepackage{color,graphicx}
\usepackage{verbatim}
\usepackage{datetime}
\usepackage{hyperref}

\setlength{\parskip}{3pt}
\setlength{\topmargin}{-.1in}
\linespread{1.2}

\textheight=8.2in
\textwidth=6.25in
\oddsidemargin=.125in
\evensidemargin=.125in

\theoremstyle{plain}
  \newtheorem{thm}{Theorem}[section]
  
  \newtheorem{prop}[thm]{Proposition}
   
	\newtheorem*{thm*a}{Theorem A}
	\newtheorem*{thm*b}{Theorem B}
\theoremstyle{definition}
  \newtheorem{defn}[thm]{Definition}
	
  \newtheorem{rmk}[thm]{Remark}
  \newtheorem*{ack*}{Acknowledgement}
  \newtheorem*{ques*}{Question}
\theoremstyle{plain}

\numberwithin{equation}{section}


\newcommand\ip[2]{\langle{#1},{#2}\rangle}

\newcommand\pl{\partial}

\newcommand\oh{\frac{1}{2}}
\newcommand\dd{{\mathrm d}}

\newcommand\dt{\delta}

\newcommand\om{\omega}
\newcommand\ta{\theta}

\newcommand\af{\alpha}

\newcommand\ld{\lambda}

\newcommand\Om{\Omega}

\newcommand\Ld{\Lambda}

\newcommand\Dt{\Delta}

\newcommand\CU{\mathcal{U}}

\newcommand\BC{\mathbb{C}}

\newcommand\BR{\mathbb{R}}
\newcommand\BN{\mathbb{N}}

\newcommand\td{\tilde}
\newcommand\br{\bar}

\newcommand\bx{\mathbf{x}}

\newcommand\bfI{\mathbf{I}}

\DeclareMathOperator{\vol}{vol}

\newcommand\ST{S^{[2]}}
\newcommand\heat{\frac{\pl}{\pl t} - \Dt}


\begin{document}

\title{Mean Curvature Flows of Two-Convex Lagrangians}

\author{Chung-Jun Tsai}
\address{Department of Mathematics, National Taiwan University, and National Center for Theoretical Sciences, Math Division, Taipei 10617, Taiwan}
\email{cjtsai@ntu.edu.tw}

\author{Mao-Pei Tsui}
\address{Department of Mathematics, National Taiwan University, and National Center for Theoretical Sciences, Math Division, Taipei 10617, Taiwan}
\email{maopei@math.ntu.edu.tw}

\author{Mu-Tao Wang}
\address{Department of Mathematics, Columbia University, New York, NY 10027, USA}
\email{mtwang@math.columbia.edu}

\date{\usdate{\today}}

\thanks{C.-J.~Tsai is supported by NSTC grant 110-2636-M-002-007, 111-2636-M-002-022, 112-2636-M-002-003 and NCTS.  M.-P.~Tsui is supported by NSTC grant 109-2115-M-002-006. This material is based upon work supported by the National Science Foundation under Grant
Numbers DMS-1810856 and DMS-2104212 (Mu-Tao Wang).  Part of this work was carried out when M.-T.~Wang was visiting the National Center of Theoretical Sciences.}

\begin{abstract} We prove regularity, global existence, and convergence of Lagrangian mean curvature flows in the two-convex case \eqref{2pos}.  Such results were previously only known in the convex case, of which the current work represents a significant improvement. The proof relies on a newly discovered monotone quantity \eqref{log_det} that controls two-convexity.  Through a unitary transformation, same result for the mean curvature flow of area-decreasing Lagrangian submanifolds \eqref{area-d} were established. 

\end{abstract}

\maketitle


\section{Introduction}

Let $M$ be a $2n$ dimensional K\"ahler manifold.  Throughout this paper, the Riemannian metric on $M$ is assumed to be flat.  The symplectic form $\omega$ on $M$ is given by $\om(\,\cdot\, , \,\cdot\,) = \ip{J(\,\cdot\,)}{\,\cdot\,}$ where $J$ is the (almost) complex structure and $\ip{\,\cdot\,}{\,\cdot\,}$ is the Riemannian metric.  We also assume that there exist parallel bundle maps $\pi_1:TM\to TM$ and $\pi_2: TM\to TM$ such that the following conditions are satisfied.
\begin{enumerate}
    \item Both $\pi_1$ and $\pi_2$ are orthogonal projections on each fiber.
    \item $\pi_1+\pi_2$ is the identity map on $TM$.
    \item The kernels of $\pi_1$ and $\pi_2$ on each fiber are Lagrangian subspaces.
\end{enumerate}
It follows that $\ker\pi_1$ and $\ker\pi_2$ are everywhere orthogonal, and $J$ maps one to the other.  Moreover, $J\pi_1 = \pi_2 J$ and $J\pi_2 = \pi_1 J$.  A typical example is $M = \BC^n$ (or any quotient of $\BC^n$ such as a complex torus) on which $\pi_1$ is projection from $\BC^n$ onto $\BR^n$ and $\pi_2$ is the projection from $\BC^n$ onto $J(\BR^n)$ where $J$ is the standard complex structure on $\BC^n$.

Given such a splitting structure on $TM$, one can define the following parallel $2$-tensor $S$ (see \cite{SW02}):
\begin{align}
S(X,Y) &= \ip{J \pi_1(X)}{\pi_2(Y)}
\label{defS} \end{align}
for any $X,Y\in T_p M$ at any $p\in M$.

Suppose $L_p$ is a Lagrangian subspace of $T_pM$, it is not hard to check that the restriction of $S$ to $L_p$ is symmetric, i.e. $S(X, Y)=S(Y, X)$ if $X, Y\in L_p$.  Moreover, if $\pi_1: L_p\to T_pM$ is injective, one can apply the singular value decomposition theorem to find an orthonormal basis $\{a_i\}$ for $\pi_1(T_pM)$ and real numbers $\{\ld_i\}$ such that
\begin{align}
\left\{ e_i = \frac{1}{\sqrt{1+\ld_i^2}}(a_i + \ld_iJ(a_i))\right\}_{i=1}^n
\label{frame} \end{align}
forms an orthonormal basis for $L_p$.  Note that $\{J(a_i)\}$ constitutes an orthonormal basis for $\pi_2(T_pM)$.  In terms of this basis,
\begin{align}
S_{ij} &= S(e_i,e_j) = \frac{\ld_i}{1+\ld_i^2}\dt_{ij} ~.
\label{Sld} \end{align}




\subsection{Two-Convex Lagrangians}

For a Lagrangian submanifold $F:L\hookrightarrow M$, we consider several geometric conditions that are characterized by the projection map $\pi_1$ and the tensor $S$ defined in \eqref{defS}.

\begin{defn}
    A Lagrangian submanifold $L\subset M$ is said to be \emph{graphical} if $\pi_1:T_pL\to T_pM$ is injective for any $p\in L$;
\end{defn}
A typical example is when $M=\BC^n$ and $L$ is the graph of $\nabla u$ for a function $u$ defined on $\BR^n$, $\lambda_i's$ in \eqref{frame} are exactly the eigenvalues of $D^2u$, the Hessian of $u$, see \cite{TW02}*{Section 2}.

The graphical condition can be characterized by the positivity of a geometric quantity introduced in \cite{W02}.
Fix an orientation for $\pi_1(TM)$.  Under $\pi_1^*$, the volume form of $\pi_1(TM)$ gives a parallel $n$-form on $M$, and denote it by $\Om$.  It is clear that a Lagrangian submanifold $L$ is graphical if and only if $*\Om$ is nowhere zero, where $*\Om$ denotes the Hodge star of the restriction of the $n$-form $\Om$ to $L$.  If so, orient the Lagrangian so that $*\Om > 0$.  With respect to \eqref{frame},
\begin{align}
    *\Om &= \frac{1}{\sqrt{\prod_{i=1}^n(1+\ld_i^2)}} ~.
\end{align}

We also consider the restriction of $S$ \eqref{defS} to $L$, $F^*S$, as a symmetric 2-tensor on $L$. By \eqref{Sld}, if the sum of any two eigenvalues of $F^*S$ is positive, then 
\begin{align}
S_{ii}+S_{jj} &= \frac{(\ld_i+\ld_j)(1+\ld_i\ld_j)}{(1+\ld_i^2)(1+\ld_j^2)} > 0
\label{sumS} \end{align}
for any $i\neq j$, or equivalently, $(\ld_i+\ld_j)(1+\ld_i\ld_j) > 0$ for any $i\neq j$.  The region is not a connected region, and we always focus on the connected component where
\begin{align}
\ld_i+\ld_j > 0 \,\text{ and }\, 1+\ld_i\ld_j > 0 \,\text{ for any }i\neq j ~.
\label{2pos} \end{align}

\begin{defn}
A graphical Lagrangian submanifold $L\subset M$ is said to be 
\begin{enumerate}
    \item \emph{convex} if $\lambda_i>0$ for each $i$ on $L$.
    \item \emph{two-convex} if $\ld_i+\ld_j > 0 \,\text{ and }\, 1+\ld_i\ld_j > 0 $ for any $i\neq j$, or \eqref{2pos} holds, on $L$
\end{enumerate}
\end{defn}

It is known that the Lagrangian condition is preserved by the mean curvature flow \cite{Smoczyk96}.
The main theorem of this paper is that \eqref{2pos} implies the long-time existence and convergence of the Lagrangian mean curvature flow.

\begin{thm} \label{main1}
Let $L\subset M$ be compact Lagrangian submanifold.  If $L$ is graphical and two-convex, then the mean curvature flow of $L$ exists for all time, and remains graphical and two-convex.  Moreover, it converges smoothly to a flat Lagrangian submanifold as $t\to\infty$.
\end{thm}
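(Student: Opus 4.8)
The plan is to run the Lagrangian mean curvature flow starting from $L$ and establish a priori estimates that are preserved along the flow, then invoke a standard convergence argument. The first and most important step is to identify the quantity advertised in the abstract, \eqref{log_det}, which should be something like $-\log\det(\mathrm{Id} + (F^*S)^{-1}\cdots)$ or, in terms of the $\lambda_i$'s, a symmetric function encoding the two-convexity inequalities $\lambda_i+\lambda_j>0$ and $1+\lambda_i\lambda_j>0$. Concretely I would look for a function $\Phi$ of the $\lambda_i$'s (equivalently of $*\Omega$ and the eigenvalues of $F^*S$) which blows up precisely as one approaches the boundary of the region \eqref{2pos}, and I would compute the evolution of $\Phi$ (or of $*\Omega$ itself) along the flow. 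The key point should be that $*\Omega$ satisfies an equation of the form $(\partial_t - \Delta)(*\Omega) = |A|^2_{\text{(weighted)}}\, (*\Omega) \geq 0$ — as is already known in the almost-calculus-of-variations literature on Lagrangian graphs — so $*\Omega$ stays bounded below, while the new ingredient is a companion subsolution controlling $1+\lambda_i\lambda_j$ from below. Combining these via the maximum principle preserves \eqref{2pos} along the flow and, crucially, gives a uniform (time-independent) lower bound on the relevant quantities, hence a uniform gradient bound: the flow stays uniformly graphical.

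Next I would upgrade this to higher-order estimates. Once we have uniform $C^1$ control (uniform graphicality) together with the preserved two-convexity, the flow can be written as a scalar parabolic PDE for the potential $u$ with $\nabla u$ describing the Lagrangian graph; two-convexity should make the relevant operator uniformly parabolic on the solution, so interior Schauder/Krylov-Safonov estimates give uniform $C^\infty$ bounds in space-time. This yields long-time existence by the standard continuation criterion. For convergence as $t\to\infty$, the monotonicity of $*\Omega$ (it is bounded and monotone, hence convergent, with $\int_0^\infty \int_L |A|^2 *\Omega\,\lesssim$ finite) forces the second fundamental form to decay along a sequence of times; combined with the uniform smooth estimates and a compactness argument, $L_t$ subconverges to a minimal (indeed, by the estimates, flat) Lagrangian. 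Finally one promotes sequential convergence to full smooth convergence, e.g.\ via a Simon-type \L ojasiewicz argument or by showing the limit is a flat plane/torus which is a strict local entropy minimizer, so the flow cannot drift away.

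The main obstacle I anticipate is the first step: finding the right monotone quantity adapted to \eqref{2pos} and proving its evolution inequality. The condition $1+\lambda_i\lambda_j>0$ for $i\neq j$ is not a pointwise convexity condition on a single symmetric function in an obvious way — it is a two-tensor positivity condition — so one must either work with the two-point function on $L\times L$ (à la Huisken–Sinestrari / Andrews for two-convex hypersurface flows) or discover the clever one-point algebraic combination, presumably $\log\det$ of an $n\times n$ matrix built from $S$ and the metric, whose Hessian has the right sign when contracted against the second fundamental form. Checking that this Hessian term dominates the (potentially bad) curvature/reaction terms — here helped by the ambient metric being flat, so there is no Riemann-tensor contribution — is the technical heart of the argument. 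Once that computation goes through, everything downstream is comparatively routine.
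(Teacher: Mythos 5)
Your first step is essentially the paper's: the monotone quantity is $\log\det(S^{[2]})=\log\prod_{i<j}\frac{(\ld_i+\ld_j)(1+\ld_i\ld_j)}{(1+\ld_i^2)(1+\ld_j^2)}$, which satisfies $(\heat)\log\det(S^{[2]})\geq 2|A|^2\geq 0$ and hence preserves \eqref{2pos} by the maximum principle, while $(\heat)\log(*\Om)\geq 0$ holds once $1+\ld_i\ld_j>0$ and gives the uniform bound on the $\ld_i$'s. So your anticipated ``technical heart'' is correct in spirit, although you do not actually produce the quantity or its evolution inequality.

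The genuine gap is your passage from uniform $C^2$ control to ``interior Schauder/Krylov--Safonov estimates give uniform $C^\infty$ bounds.'' The flow is the fully nonlinear equation \eqref{LMCF_potential0}, $\pl_t u=\theta(D^2u)$ with $\theta=\sum_i\arctan\ld_i$. Boundedness of the $\ld_i$'s does make the linearized operator uniformly parabolic, but for a fully nonlinear equation that only yields Krylov--Safonov $C^\alpha$ control of $\theta$; to bootstrap to $C^{2,\alpha}$ (and hence to bound the second fundamental form) one needs Evans--Krylov, which requires concavity of $\theta$ in $D^2u$. That concavity holds in the convex case $\ld_i>0$ but fails under mere two-convexity --- this is precisely the obstruction the paper is designed to circumvent, so your bootstrap step does not go through and long-time existence is not established. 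The paper's substitute is a blow-up argument: if $|A|$ blows up, rescale to obtain an ancient graphical solution with the same slope bounds; the Liouville theorem (Nguyen--Yuan, via Krylov--Safonov applied to the linear uniformly parabolic equation $\pl_t\theta=g^{ij}\pl_i\pl_j\theta$ on all of space-time) forces $\theta$ to be constant, so the limit is a special Lagrangian graph; the Bernstein theorem of Tsui--Wang for special Lagrangians satisfying the two-convexity-type condition then makes it an affine plane, contradicting $|A|=1$ at the blow-up point. Some such rigidity input (Liouville plus Bernstein, or an equivalent) is indispensable and is absent from your proposal; the same mechanism is also what rules out curvature blow-up as $t\to\infty$ before your convergence argument (integral decay of $|A|^2$ plus Simon's theorem, which otherwise matches the paper) can be run.
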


This theorem generalizes \cite{SW02}*{Theorem A}, which assumes $L$ is convex, or $\ld_i>0$ for all $i$. In fact, all results of Lagrangian mean curvature flows \cites{SW02,CCH12, CCY13} known to us are in the following cases: (1) the convex case, (2) cases that are equivalent to the convex case through unitary transformations (see the next subsection), or (3) cases that are perturbations of (1) and (2).

\begin{rmk}
In the proof of Theorem \ref{main1}, we implicitly assume that the ambient space $M$ is also compact.  It follows that $M$ is topologically a torus, or its finite quotient.  The theorem holds true for some non-compact $M$ as well.  For instance, if $M$ is the cotangent bundle of a flat $n$-torus, one can prove by using the distance (squared) to the zero section that the mean curvature flow of $L$ remains in a compact subset.  See for instance \cite{TW20}*{Theorem A}.
\end{rmk}

We briefly describe the steps involved the proof as follows:
\begin{enumerate}
\item We start with a compact two-convex Lagrangian that satisfies \eqref{2pos}.  We derive the evolution equation of the following quantity (see \eqref{log_det})
\[\log \prod_{i<j} \frac{(\ld_i+\ld_j)(1+\ld_i\ld_j)}{(1+\ld_i^2)(1+\ld_j^2)},\]show that it is monotone non-decreasing along the mean curvature flow, and therefore \eqref{2pos} is preserved.

\item We derive the evolution equation of 
\[\log(*\Om)=-\frac{1}{2}\log(\prod_{i=1}^n(1+\ld_i^2))\] and show that it is monotone non-decreasing along the mean curvature flow as long as $1+\lambda_i\lambda_j>0$, which was established in the last step. This in particular shows that each $\lambda_i$ remains uniformly bounded. 

\item  We prove that the second fundamental forms are bounded by contradiction. Suppose the second fundamental forms are unbounded, through a blow-up argument we obtain a non-flat ancient solution of the graphical Lagrangian mean curvature flow.
A Liouville theorem (\cite{NY11}, see Section 3)  which applies the Krylov-Safonov estimate to the equation of 
the Lagrangian angle $\theta$,
\[\theta=\sum_i^n \arctan\lambda_i,\]allows us to conclude that the ancient solution must be stationary. Finally, we apply the Bernstein theorem of \cite{TW02} that asserts any stationary solution satisfying the condition $1+\lambda_i\lambda_j>0$ must be affine and arrive at the contradiction. 

\end{enumerate}

We remark that the underlying parabolic equation is the following equation for the potential function $u$:
\begin{align}
    \frac{\pl u}{\pl t} &= \frac{1}{\sqrt{-1}} \log\frac{\det(\bfI + \sqrt{-1}D^2 u)}{\sqrt{\det(\bfI + (D^2u)^2)}} ~.
\label{LMCF_potential0} \end{align}

The estimates of $\lambda_i's$ correspond to the $C^2$ estimates of the solution $u$ and the estimates of the second fundamental forms correspond to the $C^3$ estimates. 

The convex assumption implies that the right hand side of  \eqref{LMCF_potential0} , i.e. the Lagrangian angle $\theta$, as a function of $D^2 u$ is concave in the space of symmetric matrices and thus PDE theories of fully nonlinear elliptic and parabolic equations \cites{CNS85, K87, A04} are applicable. The two-convex assumption (and the area-decreasing assumption in the next subsection) arises naturally in the study of the Lagrangian Grassmannian \cite{TW02} and the Gauss map of the mean curvature flow \cite{W03a}. It is interesting to see if some similar approach would work for related problems such as the deformed Hermitian--Yang--Mills equation considered in \cites{JY17, CJY20} or the curvature type equations considered in \cite{GZ21}. On the other hand, it is a natural question to ask if two-convexity can replace the convex assumption in the work of Caffarelli-Nirenberg-Spruck \cite{CNS85}.


\subsection{Area-Decreasing Lagrangians}

It is known that when $M=\BC^n$ the convex case $\lambda_i>0$ for each $i$ is essentially equivalent to the case $|\lambda_i|<1$ for each $i$ through a unitary transformation $U(n)$ of $\BC^n$ (\cite{TW02}, or the Lewy transformation in \cite{Y02}). The two-convex case is essentially equivalent to the following area-decreasing case through the same unitary transformation.

One can consider another parallel $2$-tensor $P$:
\begin{align}
    P(X,Y) &= \ip{\pi_1(X)}{\pi_1(Y)} - \ip{\pi_2(X)}{\pi_2(Y)} ~.
\label{defP} \end{align}
With respect to the frame \eqref{frame},
\begin{align*}
    P_{ij} &= P(e_i,e_j) = \frac{1-\ld_i^2}{1+\ld_i^2}\dt_{ij} ~.
\end{align*}
For a Lagrangian submanifold $F:L\hookrightarrow M$, $F^*P$ being $2$-positive means that
\begin{align}
    P_{ii}+P_{jj} = \frac{1-\ld_i^2\ld_j^2}{(1+\ld_i^2)(1+\ld_j^2)} > 0
\end{align}
for any $i\neq j$.

\begin{defn}
A graphical Lagrangian submanifold $L\subset M$ is said to be
\emph{area-decreasing} if 
\begin{align}\label{area-d}
    |\ld_i\ld_j| <1 \text{ for any } i \not= j
\end{align}
holds true at every $p\in L$.
\end{defn}

When $M=\BC^n$ and $L$ is the graph of $\nabla f$, the condition corresponds to $\nabla f$ as a map from $\BR^n$ to $\BR^n$ is area-decreasing. 

The same results in Theorem \ref{main1} hold true for the mean curvature flow of area-decreasing Lagrangians.

\begin{thm} \label{main2}
Let $L\subset M$ be compact Lagrangian submanifold.  If it is graphical and area-decreasing, then the mean curvature flow of $L$ exists for all time, and remains graphical and area-decreasing.  Moreover, it converges to a flat Lagrangian submanifold as $t\to\infty$.
\end{thm}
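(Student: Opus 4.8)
The plan is to reduce Theorem~\ref{main2} to Theorem~\ref{main1} via an explicit unitary change of coordinates on the ambient K\"ahler manifold, exploiting the fact that the two parallel tensors $S$ and $P$ are carried into one another under an appropriate element of $\RU(n)$ acting fiberwise (the Lewy/Cayley-type rotation already invoked in \cite{TW02,Y02}). First I would fix, on $M=\BC^n$ or its torus quotients, the constant unitary map $U$ whose action on each fiber rotates $\ker\pi_1$ and $\ker\pi_2$ by angle $\pi/4$; concretely $U = \frac{1}{\sqrt 2}(\bfI + J)$ precomposed with the obvious real identification, so that $U$ sends the Lagrangian plane spanned by \eqref{frame} with parameters $\{\ld_i\}$ to the Lagrangian plane with parameters $\{\mu_i\}$ where $\mu_i = \frac{\ld_i-1}{\ld_i+1}$ (equivalently $\ld_i = \frac{1+\mu_i}{1-\mu_i}$). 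The key algebraic point to check is that under this substitution the two-convexity inequalities \eqref{2pos}, namely $\ld_i+\ld_j>0$ and $1+\ld_i\ld_j>0$ for all $i\neq j$, become exactly $|\mu_i\mu_j|<1$ for all $i\neq j$, i.e.\ the area-decreasing condition \eqref{area-d}; this is a short computation since $1-\mu_i\mu_j = \frac{2(\ld_i+\ld_j)}{(\ld_i+1)(\ld_j+1)}$ and $1+\mu_i\mu_j = \frac{2(1+\ld_i\ld_j)}{(\ld_i+1)(\ld_j+1)}$, so both factors being positive is equivalent to $|\mu_i\mu_j|<1$ together with $(\ld_i+1)(\ld_j+1)>0$, and one must note the latter sign condition is automatically consistent on the connected component \eqref{2pos}.

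Next I would record that $U$, being a parallel ambient isometry that is also symplectic (it commutes with $J$ up to the swap $\pi_1\leftrightarrow\pi_2$, which is exactly what preserves $\om$), maps Lagrangian submanifolds to Lagrangian submanifolds and, crucially, commutes with the mean curvature flow: if $L_t$ solves MCF then $U(L_t)$ solves MCF, because MCF is invariant under ambient isometries. Moreover $U$ is a diffeomorphism of $M$, so compactness, graphicality (which in the $\mu$-picture is the statement that all $|\mu_i|<1$... wait, one must be slightly careful: graphicality for $L$ is $\pi_1|_{T_pL}$ injective, i.e.\ all $\ld_i$ finite, while graphicality of $U(L)$ in terms of the rotated projection is all $\mu_i$ finite, i.e.\ all $\ld_i\neq -1$) is what we need, and on the component \eqref{2pos} the constraint $1+\ld_i\ld_j>0$ forces at most one $\ld_i$ negative, and if some $\ld_k<0$ then $\ld_i\ld_k>-1$ forces $\ld_i<1/|\ld_k|$ finite, while the possibility $\ld_i=-1$ is excluded on \eqref{2pos} since then $\ld_i+\ld_j>0$ would force $\ld_j>1$ and $1+\ld_i\ld_j = 1-\ld_j<0$, contradiction. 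Hence two-convexity does imply $U(L)$ is graphical, closing that gap.

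With these preliminaries in place the proof is short: given $L$ compact, graphical, area-decreasing, set $\td L = U^{-1}(L)$. By the inverse of the above algebra (sending $\mu_i\mapsto\ld_i=\frac{1+\mu_i}{1-\mu_i}$), $\td L$ is graphical and two-convex, so Theorem~\ref{main1} applies: its mean curvature flow $\td L_t$ exists for all time, stays graphical and two-convex, and converges smoothly to a flat Lagrangian $\td L_\infty$. Applying $U$ and using the isometry-invariance of MCF, $L_t := U(\td L_t)$ is the mean curvature flow of $L$; it exists for all time, stays graphical and area-decreasing (the conditions transfer fiberwise by the algebra), and converges smoothly to the flat Lagrangian $U(\td L_\infty)$, which is flat since $U$ is a parallel isometry. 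This is Theorem~\ref{main2}.

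The main obstacle is not analytic at all — the analysis is entirely inherited from Theorem~\ref{main1} — but bookkeeping: one must verify carefully that the chosen $U$ genuinely intertwines $S$ and $P$ (equivalently the eigenvalue M\"obius transformation $\ld\mapsto\frac{\ld-1}{\ld+1}$ is realized by an honest parallel unitary bundle map), that it is defined globally on the torus quotient (it is, since it is constant), and — the one genuinely delicate point — that the \emph{connected component} \eqref{2pos} of the two-convex region corresponds precisely to the area-decreasing region \eqref{area-d} and not to some other branch of the locus $(\ld_i+\ld_j)(1+\ld_i\ld_j)>0$; getting the signs of $(\ld_i+1)(\ld_j+1)$ right, as sketched above, is what pins down the correct branch and hence makes the reduction valid.
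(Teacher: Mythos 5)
Your reduction breaks down at the step ``By the inverse of the above algebra, $\td L = U^{-1}(L)$ is graphical and two-convex,'' and this is a genuine gap, not bookkeeping. The area-decreasing condition \eqref{area-d} only constrains the \emph{products} $\ld_i\ld_j$ ($i\neq j$); it allows one eigenvalue to equal or exceed $1$ in absolute value. Writing $\mu_i$ for the slopes of $L$ and $\ld_i=\frac{1+\mu_i}{1-\mu_i}$ for the slopes of the rotated submanifold, one has
\begin{align*}
\ld_i+\ld_j=\frac{2(1-\mu_i\mu_j)}{(1-\mu_i)(1-\mu_j)}~,\qquad 1+\ld_i\ld_j=\frac{2(1+\mu_i\mu_j)}{(1-\mu_i)(1-\mu_j)}~,
\end{align*}
so $|\mu_i\mu_j|<1$ only gives positivity of the product $(\ld_i+\ld_j)(1+\ld_i\ld_j)$ up to the sign of $(1-\mu_i)(1-\mu_j)$, which is \emph{not} automatic in this direction. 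Concretely, at a point where the eigenvalues are $(2,0,\ldots,0)$ (perfectly area-decreasing), the rotated plane has $\ld_1=-3$, $\ld_2=1$, so $\ld_1+\ld_2<0$ and $1+\ld_1\ld_2<0$: the image lies in the \emph{other} branch of the two-convex locus, where Theorem \ref{main1} does not apply (its proof uses the component \eqref{2pos} throughout, e.g.\ in \eqref{convex_bound1} and in the appeal to the Bernstein theorem of \cite{TW02}). Worse, if some $\mu_i=1$ at a point, $U^{-1}(L)$ is not even graphical there. Your own component check only verifies the easy direction ($\eqref{2pos}\Rightarrow$ area-decreasing); the converse, which is what the reduction needs, is false. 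This is why the paper says the two cases are only ``essentially'' equivalent, and why Theorem \ref{main2} is not a corollary of Theorem \ref{main1}. (The issue of whether the constant unitary $U$ descends to a given torus quotient is a separate, more minor point: it generally maps $M=\BC^n/\Ld$ to a different quotient.)

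The paper instead proves Theorem \ref{main2} by running the argument of Theorem \ref{main1} in parallel with the tensor $P$ of \eqref{defP}: it uses the monotone quantity $\log\det P^{[2]}=\log\prod_{i<j}\frac{1-\ld_i^2\ld_j^2}{(1+\ld_i^2)(1+\ld_j^2)}$, whose evolution inequality $(\heat)\log\det P^{[2]}\geq 2|A|^2$ and the preservation of the graphical and area-decreasing conditions are quoted from \cite{TTW22}; the blow-up analysis then combines the Liouville theorem (Theorem \ref{thm_Liouville}) with the Bernstein theorem of \cite{W03} for minimal graphs satisfying $|\ld_i\ld_j|\leq 1-\dt$ (in place of \cite{TW02}), and the convergence step is identical. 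If you want to salvage a transformation-based proof, you would have to show that the area-decreasing region is preserved and controlled directly, which is exactly what the $P^{[2]}$ computation accomplishes; the unitary rotation alone cannot carry the full area-decreasing class into the component \eqref{2pos}.
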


This theorem generalizes \cite{STW16}*{Theorem 2}, which assumes $\dim L = 2$.

The paper is organized as follows.  In section 2, we derive the evolution equations and provide quantitative bounds of relevant quantities.  In section 3, a Liouville Theorem for ancient solutions of Lagrangian mean curvature flows is discussed.  Section 4 is devoted to prove Theorem \ref{main1} and Theorem \ref{main2}.

\begin{ack*}The second and third authors thank Professor Smoczyk for a discussion in December 2012, in which it was observed that the argument in \cite{TW04} also implies the positivity of $S^{[2]}$ defined in \eqref{def_S2} is preserved along Lagrangian mean curvature flows.\end{ack*}


\section{Evolution Equations}

For a Lagrangian submanifold, $J$ induces an isometry between its tangent bundle and its normal bundle.  As a consequence, its second fundamental form is totally symmetric.  That is to say,
\begin{align}
h_{ijk} &= \ip{\br{\nabla}_{e_i}e_j}{J(e_k)}
\end{align} \label{second} 
is totally symmetric in $i,j,k$, where $\{e_i\}$ is an orthonormal basis for its tangent space. Here $\br{\nabla}$ is the covariant derivative of $M$.

Suppose that $F:L\times[0,T)\to M$ is a Lagrangian mean curvature flow.  From \cite{SW02}*{section 3.2}, $F^*S$ satisfies
\begin{align}
(\heat)S_{ij} &= h_{mki}h_{mk\ell}S_{\ell j} + h_{mkj}h_{mk\ell}S_{i\ell} + 2h_{ki\ell}h_{kjm}S_{\ell m} ~,
\label{evolS} \end{align}
where the equation is in terms of an evolving orthonormal frame and repeated indexes are summed. 

Since $S$ is a parallel tensor on $M$, $\br{\nabla} S=0$, the (spatial) gradient of $F^*S$ is (see for example \cite{TW04}*{p.1121})
\begin{align}
S_{ij;k} &= e_k(S(e_i,e_j)) - S(\nabla_{e_k}e_i,e_j) - S(e_i,\nabla_{e_k}e_j) \notag \\
&= S(\br{\nabla}_{e_k}e_i-\nabla_{e_k}e_i,e_j) + S(e_i, \br{\nabla}_{e_k}e_j-\nabla_{e_k}e_j),
\end{align} where $\nabla$ is the covariant derivative on $L$.
The definition of second fundamental forms \eqref{second} implies 
$\br{\nabla}_{e_k}e_i-\nabla_{e_k}e_i=h_{ki\ell} J(e_\ell)$ and 
\[S_{ij;k} = h_{ki\ell}\,S(J(e_\ell),e_j) +h_{kj\ell}\,S(e_i,J(e_\ell)).\]

At a space-time point $p$, $S_{ij;k}$ can thus be expressed in terms the frame \eqref{frame} as
\begin{align}
S_{ij;k}
= h_{kij} \left( \frac{\ld_j^2}{1+\ld_j^2} - \frac{1}{1+\ld_i^2} \right) = -\oh h_{kij} \left( \frac{1-\ld_i^2}{1+\ld_i^2} + \frac{1-\ld_j^2}{1+\ld_j^2} \right). \label{gradS}
\end{align}


\subsection{The Logarithmic Determinant of $\ST$}

In \cite{CNS85}, another tensor $\ST$ is introduced to study the two-positivity of $F^*S$; see also \cite{TW04}*{section 5}.  Similar to \cite{TTW22}, we consider the equation of the logarithmic determinant of $\ST$.

With respect to an orthonormal frame, $\ST$ is defined by
\begin{align}
\ST_{(ij)(k\ell)} &= S_{ik}\dt_{j\ell} + S_{j\ell}\dt_{ik} - S_{i\ell}\dt_{jk} - S_{jk}\dt_{i\ell} \label{def_S2}
\end{align}
for any $i<j$ and $k<\ell$.
It can be regarded as a symmetric endomorphism on $\Ld^2TL$.

At a space-time point $p$, suppose that $S$ is diagonal in terms of the frame \eqref{frame}.  It follows from \eqref{def_S2} that $\ST_{(ij)(k\ell)}|_{p} = (S_{ii} + S_{jj})\dt_{ik}\dt_{j\ell}$.  Thus, the $2$-positivity of $F^*S$ is equivalent to the positivity of $\ST$.

It can be proved that the positivity of $\ST$ is preserved along the flow in the same way as in \cite{TW04}. However, in this article we consider another quantity that also controls the two-convexity condition: the logarithmic determinant of $S^{[2]}$:
\begin{equation}\label{log_det}
\log\det(\ST)=\log \prod_{i<j} \frac{(\ld_i+\ld_j)(1+\ld_i\ld_j)}{(1+\ld_i^2)(1+\ld_j^2)}.
\end{equation}

It is a straightforward computation to show that the function $\log \det(\ST)$ satisfies
\begin{align} \begin{split}
&\quad (\heat)\log(\det \ST) \\
&\stackrel{\text{at }p}{=} \sum_{1\leq i<j\leq n} \left[ (S_{ii}+S_{jj})^{-1}(\heat)(S_{ii}+S_{jj}) +  (S_{ii}+S_{jj})^{-2}|\nabla(S_{ii}+S_{jj})|^2 \right ] \\
&\qquad + 2\sum_{1\leq i\leq n}\sum_{\substack{1\leq j<k\leq n\\j\neq i,\,k\neq i}}(S_{ii}+S_{jj})^{-1}(S_{ii}+S_{kk})^{-1}|\nabla{S_{jk}}|^2 ~.
\end{split} \label{lndetST1} \end{align}

The main task is to calculate the first term on the right hand side of \eqref{lndetST1}.  According to \eqref{evolS},
\begin{align*}
(\heat)(S_{ii}+S_{jj}) &= 2\sum_{k,\ell} \left[ h_{k\ell i}^2(S_{ii} + S_{\ell\ell}) + h_{k\ell j}^2(S_{jj} + S_{\ell\ell}) \right] \\
&= 4\sum_k\left[ h_{kii}^2\,S_{ii} + h_{kjj}^2\,S_{jj} \right] + 4(S_{ii} + S_{jj})\sum_{k}h_{kij}^2 \\
&\quad + 2\sum_{k}\sum_{\ell\neq\{i,j\}} \left[ h_{k\ell i}^2(S_{ii} + S_{\ell\ell}) + h_{k\ell j}^2(S_{jj} + S_{\ell\ell}) \right] ~.
\end{align*}
By \eqref{gradS},
\begin{align}
|\nabla(S_{ii} + S_{jj})|^2 &= \sum_k \left( h_{kii}\frac{1-\ld_i^2}{1+\ld_i^2} + h_{kjj}\frac{1-\ld_j^2}{1+\ld_j^2} \right)^2 ~.
\label{gradS1} \end{align}
It follows that
\begin{align*}
&\quad (S_{ii} + S_{jj})(\heat)(S_{ii} + S_{jj}) + |\nabla(S_{ii} + S_{jj})|^2 \\
&\qquad - 4(S_{ii} + S_{jj})^2\sum_{k}h_{kij}^2 - 2(S_{ii}+S_{jj})\sum_{k}\sum_{\ell\neq\{i,j\}} \left[ h_{k\ell i}^2(S_{ii} + S_{\ell\ell}) + h_{k\ell j}^2(S_{jj} + S_{\ell\ell}) \right] \\
&= 4(S_{ii} + S_{jj})\sum_k\left[ h_{kii}^2\,S_{ii} + h_{kjj}^2\,S_{jj} \right] + \sum_k \left( h_{kii}\frac{1-\ld_i^2}{1+\ld_i^2} + h_{kjj}\frac{1-\ld_j^2}{1+\ld_j^2} \right)^2 \\
&= \sum_k \left[ \frac{(\ld_i+\ld_j)^2 + (1+\ld_i\ld_j)^2}{(1+\ld_i^2)(1+\ld_j^2)}(h_{kii}^2 + h_{kjj}^2) + \frac{(1-\ld_i^2)(1-\ld_j^2)}{(1+\ld_i^2)(1+\ld_j^2)}2h_{kii}h_{kjj} \right] \\
&= \sum_{k} \left[ \frac{(1+\ld_i\ld_j)^2}{(1+\ld_i^2)(1+\ld_j^2)}(h_{kii}+h_{kjj})^2 + \frac{(\ld_i+\ld_j)^2}{(1+\ld_i^2)(1+\ld_j^2)}(h_{kii}-h_{kjj})^2 \right] ~.
\end{align*}
Hence,
\begin{align*}
&\quad  (S_{ii}+S_{jj})^{-1}(\heat)(S_{ii}+S_{jj}) +  (S_{ii}+S_{jj})^{-2}|\nabla(S_{ii}+S_{jj})|^2 \\
&= 4\sum_{k}h_{kij}^2 + 2(S_{ii}+S_{jj})^{-1}\sum_{k}\sum_{\ell\neq\{i,j\}} \left[ h_{k\ell i}^2(S_{ii} + S_{\ell\ell}) + h_{k\ell j}^2(S_{jj} + S_{\ell\ell}) \right] \\
&\quad +  \sum_{k} \left[ \frac{(1+\ld_i^2)(1+\ld_j^2)}{(\ld_i+\ld_j)^2}(h_{kii}+h_{kjj})^2 + \frac{(1+\ld_i^2)(1+\ld_j^2)}{(1+\ld_i\ld_j)^2}(h_{kii}-h_{kjj})^2 \right] ~.
\end{align*}

With \eqref{lndetST1}, it leads to the following Proposition.
\begin{prop} \label{prop_lndetST}
Suppose that a graphical Lagrangian mean curvature flow is two-convex, then the function $\log(\det\ST)$ satisfies
\begin{align}
&\quad (\heat)\log(\det\ST) \notag \\
&\geq \sum_{i<j}\sum_{k} \left[ 4h_{kij}^2 + \frac{(1+\ld_i^2)(1+\ld_j^2)}{(\ld_i+\ld_j)^2}(h_{kii}+h_{kjj})^2 + \frac{(1+\ld_i^2)(1+\ld_j^2)}{(1+\ld_i\ld_j)^2}(h_{kii}-h_{kjj})^2 \right] \label{lndetST2} \\
&\geq 2|A|^2 \geq 0 ~. 
\label{lndetST2a}
\end{align}

In particular, $\min \log(\det\ST)$ is monotone non-decreasing along the flow and two-convexity is preserved. 
\end{prop}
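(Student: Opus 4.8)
The plan is to read the two differential inequalities \eqref{lndetST2} and \eqref{lndetST2a} directly off the identity derived in the paragraph preceding the statement, and then to invoke the scalar maximum principle on the closed manifold $L$. Concretely, I would substitute the computed value of $(S_{ii}+S_{jj})^{-1}(\heat)(S_{ii}+S_{jj}) + (S_{ii}+S_{jj})^{-2}|\nabla(S_{ii}+S_{jj})|^2$ into the right-hand side of \eqref{lndetST1}. Since two-convexity \eqref{2pos} forces $S_{ii}+S_{jj}>0$ for all $i\neq j$ by \eqref{sumS}, the terms appearing in the resulting expression but not in \eqref{lndetST2}, namely $\sum_{i<j}2(S_{ii}+S_{jj})^{-1}\sum_k\sum_{\ell\notin\{i,j\}}\big[h_{k\ell i}^2(S_{ii}+S_{\ell\ell})+h_{k\ell j}^2(S_{jj}+S_{\ell\ell})\big]$ and $2\sum_i\sum_{j<k,\,j,k\neq i}(S_{ii}+S_{jj})^{-1}(S_{ii}+S_{kk})^{-1}|\nabla S_{jk}|^2$, are each a sum of nonnegative quantities; dropping them gives \eqref{lndetST2}.

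To pass to \eqref{lndetST2a}, I would first note that both coefficients in \eqref{lndetST2} are at least $1$, because $\frac{(1+\ld_i^2)(1+\ld_j^2)}{(\ld_i+\ld_j)^2}-1 = \frac{(1-\ld_i\ld_j)^2}{(\ld_i+\ld_j)^2}\geq 0$ and $\frac{(1+\ld_i^2)(1+\ld_j^2)}{(1+\ld_i\ld_j)^2}-1 = \frac{(\ld_i-\ld_j)^2}{(1+\ld_i\ld_j)^2}\geq 0$; by the parallelogram identity $(a+b)^2+(a-b)^2 = 2a^2+2b^2$ the $(i,j,k)$-summand of \eqref{lndetST2} is then at least $4h_{kij}^2+2h_{kii}^2+2h_{kjj}^2$. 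It remains to verify the elementary inequality
\[
\sum_{i<j}\sum_k\big(4h_{kij}^2+2h_{kii}^2+2h_{kjj}^2\big)\;\geq\;2\sum_{i,j,k}h_{ijk}^2=2|A|^2,
\]
which uses only the total symmetry of $h_{ijk}$: grouping the components by whether the index multiset $\{i,j,k\}$ has one, two, or three distinct entries, a short count turns the difference of the two sides into $(2n-4)\big(\sum_i h_{iii}^2+\sum_{p\neq q}h_{ppq}^2\big)$, which is $\geq 0$ for $n\geq 2$ (for $n=1$ there is nothing to prove since $\Ld^2 TL=0$). This establishes \eqref{lndetST2a}.

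For the last two assertions I would argue as follows. On any interval on which the flow exists and is a graphical two-convex Lagrangian mean curvature flow, \eqref{lndetST2a} gives $(\heat)\log(\det\ST)\geq 0$, so the scalar maximum principle for the heat operator on the compact $L$ shows that $t\mapsto\min_L\log(\det\ST)$ is non-decreasing, and hence $\det\ST\geq c:=\exp\big(\min_L\log(\det\ST)\big|_{t=0}\big)>0$ throughout. Since each factor $S_{ii}+S_{jj}=\frac{(\ld_i+\ld_j)(1+\ld_i\ld_j)}{(1+\ld_i^2)(1+\ld_j^2)}$ lies in $[-1,1]$ (because $|S_{ii}|\leq\oh$), the bound $\prod_{i<j}(S_{ii}+S_{jj})=\det\ST\geq c$ forces $|S_{ii}+S_{jj}|\geq c$ for every pair; being continuous along the flow, positive at $t=0$, and never vanishing, each $S_{ii}+S_{jj}$ stays $\geq c>0$. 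Then $(\ld_i+\ld_j)(1+\ld_i\ld_j)=(S_{ii}+S_{jj})(1+\ld_i^2)(1+\ld_j^2)\geq c>0$ for all $i\neq j$, so neither $\ld_i+\ld_j$ nor $1+\ld_i\ld_j$ ever vanishes; since both are positive at $t=0$ and depend continuously on $t$, condition \eqref{2pos} is preserved, i.e. the flow stays two-convex for as long as it exists.

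The genuinely computational step — the completion of squares producing the two positive-coefficient square terms, which rests on the identity $(1+\ld_i\ld_j)^2-(\ld_i+\ld_j)^2=(1-\ld_i^2)(1-\ld_j^2)$ — has already been carried out in the paragraph before the statement, so for the Proposition itself the only delicate points are the combinatorial bookkeeping of the second step and, in the third, deducing preservation of the specific connected component \eqref{2pos} (not merely of the positivity of $\ST$) from a positive lower bound on $\det\ST$; this is where the uniform bound $|S_{ii}+S_{jj}|\leq 1$ and the connectedness of the flow in time are used.
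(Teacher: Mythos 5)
Your derivation of the two displayed inequalities is correct and is essentially the paper's own argument: you drop the manifestly nonnegative terms in \eqref{lndetST1} (legitimate because two-convexity makes every $S_{ii}+S_{\ell\ell}>0$), observe that both coefficients in \eqref{lndetST2} are at least $1$ via $(1+\ld_i^2)(1+\ld_j^2)-(\ld_i+\ld_j)^2=(1-\ld_i\ld_j)^2$ and $(1+\ld_i^2)(1+\ld_j^2)-(1+\ld_i\ld_j)^2=(\ld_i-\ld_j)^2$, and then use the parallelogram identity; your combinatorial count of the surplus, $(2n-4)\bigl(\sum_i h_{iii}^2+\sum_{p\neq q}h_{ppq}^2\bigr)$, agrees with the paper's $2(n-2)\sum_{i,k}h_{kii}^2$. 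Up to \eqref{lndetST2a}, nothing to object to.

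The gap is in your proof that \eqref{2pos} is preserved. Your argument uses only the lower bound $\det\ST\geq c$, the bound $|S_{ii}+S_{jj}|\leq 1$, and continuity in time of the individual factors $\ld_i+\ld_j$ and $1+\ld_i\ld_j$. But these factors are defined (and continuous) only as long as the flow stays \emph{graphical}, and a lower bound on $\det\ST$ does not by itself prevent loss of graphicality: already at the linear-algebra level, for $n=2$ the non-graphical Lagrangian plane spanned by $\frac{1}{\sqrt{2}}(a_1+Ja_1)$ and $Ja_2$ (i.e.\ $\ld_1=1$, ``$\ld_2=\infty$'') has $\det\ST=S_{11}+S_{22}=\oh$, so the pointwise constraint $S_{ii}+S_{jj}\geq c$ is compatible with one slope blowing up. If some $\ld_i\to\infty$ at a first critical time, the factor $1+\ld_i\ld_j$ can change sign ``through infinity'' without ever vanishing, so your ``positive initially, never zero, hence positive'' continuation does not close, and \eqref{2pos} cannot even be formulated at that time. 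This is precisely why the paper couples this Proposition with Proposition \ref{prop_lnOm}: as long as $1+\ld_i\ld_j>0$, $\min\log(*\Om)$ is also non-decreasing, which gives the uniform slope bound \eqref{ld_bound} (hence graphicality persists with a quantitative margin), and combined with \eqref{convex_bound} it yields the separate positive lower bounds \eqref{convex_bound1} on $\ld_i+\ld_j$ and $1+\ld_i\ld_j$; the continuation argument is then closed at the putative first time $\bar{\tau}$ by openness, as in Section \ref{sec_preserve}. To repair your last paragraph you need this (or an equivalent) control of $*\Om$ keeping the flow graphical; the $\det\ST$ estimate alone is not enough.
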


\begin{proof}
It remains to show that \eqref{lndetST2a} is no less than $2|A|^2$ under the condition \eqref{2pos}.  It is straightforward to verify that under the condition \eqref{2pos},
\begin{align}
\frac{(1+\ld_i^2)(1+\ld_j^2)}{(\ld_i+\ld_j)^2} \geq 1 \quad\text{and}\quad \frac{(1+\ld_i^2)(1+\ld_j^2)}{(1+\ld_i\ld_j)^2} \geq 1 ~.
\end{align}
Note that both equalities are attained when $\ld_i = 1 = \ld_j$.  Hence, the right hand side of \eqref{lndetST2} is no less than
\begin{align}
\sum_{i<j}\sum_{k} \left[ 4h_{kij}^2 + 2h_{kii}^2 + 2h_{kjj}^2\right] = 2|A|^2 + 2(n-2)\sum_{i,k}h_{kii}^2 ~.
\end{align}
It finishes the proof of this proposition.
\end{proof}



\subsection{The Logarithmic Determinant of the Jacobian of $\pi_1$}

In the minimal Lagrangian case, the equation for $\log(*\Om)$ is derived in \cite{TW02}*{(2.4)}.  The computation in the parabolic case is essentially the same, and
\begin{align*}
    (\heat)\log(*\Om) &= \sum_{i,j,k}h_{ijk}^2 + \sum_{i,j}\ld_i^2\,h_{iij}^2 + 2\sum_{\substack{i,j,k\\i<j}}\ld_i\ld_j\,h_{ijk}^2 ~.
\end{align*}
By re-grouping the summations, one finds the following proposition.
\begin{prop} \label{prop_lnOm}
Along a graphical Lagrangian mean curvature flow, the function $\log(*\Om)$ satisfies
\begin{align} \begin{split}
(\heat)\log(*\Om) &= \sum_{i}(1+\ld_i^2)h_{iii}^2 + \sum_{i\neq j}(3+\ld_i^2 + 2\ld_i\ld_j)h_{iij}^2 \\
    &\quad + \sum_{i<j<k}(6+2\ld_i\ld_j+2\ld_j\ld_k+2\ld_k\ld_i)h_{ijk}^2 ~.
\end{split} \label{logdetOm} \end{align}
If the flow is in addition two-convex, then $(\heat)\log(*\Om) \geq 0$ and $\min \log(*\Om) $ in monotone non-decreasing along the flow. 
\end{prop}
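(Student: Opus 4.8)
The plan is to start from the evolution identity displayed just before the statement,
\[
(\heat)\log(*\Om) = \sum_{i,j,k}h_{ijk}^2 + \sum_{i,j}\ld_i^2\,h_{iij}^2 + 2\sum_{\substack{i,j,k\\i<j}}\ld_i\ld_j\,h_{ijk}^2 ,
\]
which I take as given (it is the parabolic counterpart of \cite{TW02}*{(2.4)}, obtained by differentiating $\log(*\Om)=-\oh\sum_i\log(1+\ld_i^2)$ along the flow and using that $\Om$ is parallel), and to re-organize the right-hand side according to the combinatorial type of the index triple. Since $h_{ijk}$ is totally symmetric, $h_{ijk}^2$ depends only on the multiset $\{i,j,k\}$, so I would split the sum into the three cases $i=j=k$, exactly two indices equal, and all three distinct, and then read off the coefficient of $h_{iii}^2$, of $h_{iij}^2$ for a fixed ordered pair $i\neq j$, and of $h_{ijk}^2$ for $i<j<k$.

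The bookkeeping goes as follows. The term $h_{iii}^2$ picks up $1$ from the first sum and $\ld_i^2$ from the second, hence coefficient $1+\ld_i^2$. For a fixed ordered pair $i\neq j$, the term $h_{iij}^2$ picks up $3$ from the first sum (the three orderings of $\{i,i,j\}$), $\ld_i^2$ from the second sum (only the summand with first index $i$ and second index $j$ has index multiset $\{i,i,j\}$, as that sum is \emph{not} symmetric in its two indices), and $2\ld_i\ld_j$ from the third sum (the unique ordering of $\{i,i,j\}$ whose first entry is strictly below its second), hence coefficient $3+\ld_i^2+2\ld_i\ld_j$. For $i<j<k$, the term $h_{ijk}^2$ picks up $6$ from the first sum, nothing from the second, and $2(\ld_i\ld_j+\ld_j\ld_k+\ld_k\ld_i)$ from the third (three of the six orderings of $\{i,j,k\}$ have first entry below second), hence coefficient $6+2\ld_i\ld_j+2\ld_j\ld_k+2\ld_k\ld_i$. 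Summing over the three types reproduces \eqref{logdetOm}.

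For the second assertion I would check that under \eqref{2pos} every coefficient in \eqref{logdetOm} is nonnegative, noting that in fact only the inequalities $1+\ld_i\ld_j>0$ are used (together with the automatic $1+\ld_i^2>0$): indeed $3+\ld_i^2+2\ld_i\ld_j=(1+\ld_i^2)+2(1+\ld_i\ld_j)>0$ and $6+2\ld_i\ld_j+2\ld_j\ld_k+2\ld_k\ld_i=2(1+\ld_i\ld_j)+2(1+\ld_j\ld_k)+2(1+\ld_k\ld_i)>0$. Thus $(\heat)\log(*\Om)\geq 0$ along a graphical two-convex Lagrangian mean curvature flow. Since in the setting of Theorem \ref{main1} the submanifold $L$ is closed, the parabolic minimum principle then applies on each cylinder $L\times[t_1,t_2]$: a supersolution of the heat equation attains its spatial minimum at the initial time, so $t\mapsto\min_L\log(*\Om)(\cdot,t)$ is non-decreasing, which is the last claim.

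I do not anticipate a genuine obstacle. The one delicate point is the re-grouping in the second paragraph: one must count the orderings of each index multiset correctly and, in particular, keep in mind that the middle sum $\sum_{i,j}\ld_i^2\,h_{iij}^2$ is not symmetric under $i\leftrightarrow j$, so that $h_{iij}^2$ and $h_{jji}^2$ receive the different weights $\ld_i^2$ and $\ld_j^2$ from it; mishandling this would spoil the clean coefficients in \eqref{logdetOm}. Everything else is the standard maximum-principle argument already used for results such as Proposition \ref{prop_lndetST}.
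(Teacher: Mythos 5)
Your proposal is correct and follows essentially the same route as the paper: the paper likewise takes the displayed evolution identity (from the computation in \cite{TW02}) as the starting point, obtains \eqref{logdetOm} by exactly this re-grouping of the index triples, and observes that non-negativity uses only $1+\ld_i\ld_j>0$ before invoking the maximum principle on the compact $L$. Your coefficient bookkeeping, including the asymmetry of the middle sum in $i$ and $j$, checks out.
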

\begin{proof}
It is clear that the right hand side is non-negative if $1+\lambda_i\lambda_j>0$ which is part of the two-convexity assumption. 

\end{proof}


\subsection{Some Quantitative Bounds}

Since $\frac{\ld}{1+\ld^2}$ takes value within $[-\oh,\oh]$, the expression \eqref{sumS}, $S_{ii} + S_{jj}$, is always no greater than $1$.  It follows that for a two-convex Lagrangian, $\det\ST$ takes value within $(0,1]$.  Hence, $\log(\det\ST)\in(-\infty,0]$.  Since $*\Om = 1/\sqrt{\prod_{i}(1+\ld_i^2)}$, $\log(*\Om)\in(-\infty,0]$.

For a two-convex Lagrangian submanifold, suppose that $\log(*\Om) \geq -\dt_1$ and $\log(\det\ST) \geq -\dt_2$ for some $\dt_1,\dt_2>0$.  It follows that
\begin{align}
    \sum_i\ld_i^2 &\leq e^{2\dt_1} - 1
\label{ld_bound} \end{align}
for all $i$.  From $\log(\det\ST) \geq -\dt_2$,
\begin{align*}
    \frac{(\ld_i+\ld_j)(1+\ld_i\ld_j)}{(1+\ld_i^2)(1+\ld_j^2)} &\geq \prod_{k<\ell}(S_{kk}+S_{\ell\ell}) \geq e^{-\dt_2}
\end{align*}
for any $i\neq j$.  Therefore,
\begin{align}
    {(\ld_i+\ld_j)(1+\ld_i\ld_j)} &\geq e^{-\dt_2} ~.
\label{convex_bound} \end{align}
Under the condition \eqref{2pos}, \eqref{ld_bound} and \eqref{convex_bound} lead to that
\begin{align}
    1+\ld_i\ld_j \geq \frac{e^{-\dt_2}}{\sqrt{2(e^{2\dt_1}-1)}}
    \quad\text{and}\quad \ld_i+\ld_j \geq \frac{2e^{-\dt_2}}{e^{2\dt_1}+1}
\label{convex_bound1} \end{align}
for any $i\neq j$.


\section{A Liouville Theorem}

In this section, we state a Liouville theorem for ancient solutions of the Lagrangian mean curvature flow in $\BC^n\equiv\BR^{2n}$ under the graphical condition. For discussions of ancient solutions of the Lagrangian mean curvature flow under other assumptions, see \cite{LLS21}.   The theorem
is due to Nguyen and Yuan \cite{NY11}*{Proposition 2.1} and is a direct consequence of the Krylov--Safonov estimate \cite{K87}. We include the proof here for completeness. 

\begin{thm} \label{thm_Liouville}
Let $u$ be a smooth solution to
\begin{align}
    \frac{\pl u}{\pl t} &= \frac{1}{\sqrt{-1}} \log\frac{\det(\bfI + \sqrt{-1}D^2 u)}{\sqrt{\det(\bfI + (D^2u)^2)}}
\label{LMCF_potential} \end{align}
in $Q = \BR^n\times(-\infty,t_0]$ for some $t_0>0$.  Denote by $\ld_1,\ldots,\ld_n$ the eigenvalues of the Hessian of $u$, $D^2u$.  Suppose that every $|\ld_i|$ is bounded over $Q$.  
Then, $u$ is stationary, i.e.\ $u$ satisfies the special Lagrangian equation. 
\end{thm}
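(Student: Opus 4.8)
The plan is to exploit the fact that the Lagrangian angle $\theta = \sum_i \arctan\lambda_i$ is, along the mean curvature flow, a solution of a quasilinear parabolic equation in divergence-free form (indeed $(\heat)\theta = 0$, since $\theta$ evolves by the heat operator of the induced metric and the mean curvature vector is $J\nabla\theta$). Under the graphical assumption the flow is governed by the fully nonlinear equation \eqref{LMCF_potential} for the potential $u$, and differentiating once, the quantity $\theta = \frac{1}{\sqrt{-1}}\log\frac{\det(\bfI+\sqrt{-1}D^2u)}{\sqrt{\det(\bfI+(D^2u)^2)}}$ satisfies a \emph{linear} uniformly parabolic equation $\partial_t \theta = a^{ij}(D^2u)\,\partial_i\partial_j\theta$, where $a^{ij} = \frac{1}{1+\lambda^2}$ in the eigenbasis of $D^2u$. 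The hypothesis that all $|\lambda_i|$ are bounded over $Q$ is precisely what makes the coefficient matrix $(a^{ij})$ uniformly elliptic with bounds depending only on $\sup_Q|\lambda_i|$.

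First I would record that $\theta$ is globally bounded: since $|\arctan\lambda_i| < \pi/2$ we have $|\theta| < n\pi/2$ on all of $Q$. Next, I would apply the Krylov--Safonov Harnack inequality (the parabolic version, as in \cite{K87}) to the function $n\pi/2 - \theta \geq 0$ (and separately to $n\pi/2 + \theta$), which is a nonnegative solution of a linear uniformly parabolic equation in $Q = \BR^n \times (-\infty, t_0]$. The standard argument for ancient solutions is: fix a reference point, apply the Harnack inequality on the parabolic cylinder $Q_R = B_R \times (-R^2, 0]$ relating $\inf$ over a forward cylinder to $\sup$ over a backward one, and let $R \to \infty$; the oscillation of a bounded ancient solution over such cylinders must decay, forcing $\theta$ to be constant in space and time. (Equivalently one invokes the parabolic Liouville theorem directly.)

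Once $\theta$ is constant, $\nabla\theta \equiv 0$, so the mean curvature vector $H = J\nabla\theta$ vanishes identically; the Lagrangian submanifold $L_t$ is therefore minimal for every $t$, hence stationary, and $u$ satisfies the special Lagrangian equation $\mathrm{Im}\,\det(\bfI + \sqrt{-1}D^2u) = (\tan c)\,\mathrm{Re}\,\det(\bfI+\sqrt{-1}D^2u)$ for a constant $c$ — equivalently $\frac{\partial u}{\partial t}$ is a spatial constant, and after absorbing it we get a genuine static solution.

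The main obstacle, and the only substantive point requiring care, is verifying that the linearized operator acting on $\theta$ is genuinely uniformly parabolic with constants controlled \emph{solely} by the a priori bound on $|\lambda_i|$ — i.e.\ that differentiating \eqref{LMCF_potential} produces the operator $a^{ij}\partial_i\partial_j$ with $a^{ij}$ having eigenvalues in $[\frac{1}{1+\Lambda^2}, 1]$ when $\sup|\lambda_i| \leq \Lambda$ — and that Krylov--Safonov applies with no regularity assumed on $a^{ij}$ beyond measurability and ellipticity. This is where the hypothesis that \emph{all} eigenvalues (not just, say, the angle) are bounded is essential: it is what rules out degeneracy of the linearization and makes the Harnack estimate scale-invariant. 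The remaining steps — boundedness of $\theta$, the Harnack-to-Liouville passage, and the conclusion that vanishing $\nabla\theta$ gives a special Lagrangian — are routine.
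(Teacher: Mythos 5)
Your proposal is correct and follows essentially the same route as the paper: differentiate \eqref{LMCF_potential} in $t$ to see that the Lagrangian angle $\ta$ solves the uniformly parabolic equation $\pl_t\ta = g^{ij}\pl_i\pl_j\ta$ with $g = \bfI+(D^2u)^2$ (uniform ellipticity coming exactly from the bound on the $\ld_i$), then use Krylov--Safonov on expanding parabolic cylinders to force the bounded ancient solution $\ta$ to be constant, whence $H=J\nabla\ta=0$ and $u$ is special Lagrangian. The only cosmetic difference is that you run the Harnack/oscillation-decay version applied to $n\pi/2\pm\ta$, while the paper quotes the Krylov--Safonov interior H\"older seminorm bound $C r^{-\af}$ and lets $r\to\infty$; these are the same estimate.
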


\begin{proof}
Denote the right hand side of \eqref{LMCF_potential} by $\ta$.  It is the argument of the complex number $\det(\bfI+\sqrt{-1}D^2u)$.  Note that $\ta$ is a smooth function over $Q$, and takes value within $(-n\pi/2,n\pi/2)$.  According to \cite{HL82}*{\S III.2.D}, the differential of $\ta$ is equivalent to the mean curvature of the gaph of $Du$.  As a consequence, \eqref{LMCF_potential} means that the Lagrangian $\{(x,Du)\}$ evolves under the mean curvature flow.  

The induced metric on the graph of $Du$ has the first fundamental form given by
\begin{align*}
    g &= \bfI + (D^2u)^2 = (\bfI+\sqrt{-1}D^2u)(\bfI-\sqrt{-1}D^2u) ~.
\end{align*}
In particular, $(\bfI+\sqrt{-1}D^2u)^{-1}$ is $(\bfI-\sqrt{-1}D^2u)g^{-1} = g^{-1}(\bfI-\sqrt{-1}D^2u)$.  With this understood, the derivative of \eqref{LMCF_potential} in $t$ gives
\begin{align}
    \frac{\pl\ta}{\pl t} &= g^{ij}\,\pl_t u_{ij} = g^{ij}\,\pl_i\pl_j(\pl_t u) = g^{ij}\,\pl_i\pl_j\ta ~,
\label{Lag_angle} \end{align}
where $[g^{ij}]$ is the inverse of $g = \bfI + (D^2u)^2$.  Since $|\ld_i|$'s are uniformly bounded, \eqref{Lag_angle} is a uniformly parabolic equation.  As the right hand side of \eqref{Lag_angle} has no first and zeroth order terms,  the Krylov--Safonov estimate \cite{K87}*{Lemma 2 on p.133} implies that there exist positive $\af$ and $C$  depending on $n$, $\sup_{Q}|u|$ and $\sup_{Q}|D^2u|$ such that
\begin{align*}
    \sup\left\{ \frac{|\ta(\td{x},\td{t})-\ta(x,t)|}{\max\{|\td{x}-x|^\af,|\td{t}-t|^{\frac{\af}{2}}\}} : (\td{x},\td{t}),(x,t)\in B_r\times[t_0-r^2,t_0] ~,~ (\td{x},\td{t})\neq(x,t) \right\} &\leq C\,\frac{1}{r^\af}
\end{align*}
for any $r>0$.  By letting $r\to\infty$, one finds that $\ta(x,t) = \ta(\td{x},\td{t})$ for any $(x,t)\neq(\td{x},\td{t})$.

It follows that the graph of $Du$ is a time-independent minimal/special Lagrangian submanifold.   It finishes the proof of this theorem.
\end{proof}


\section{Proof of the Main Theorems}

This section is devoted to the proof of Theorem \ref{main1}.  The proof of Theorem \ref{main2} is almost the same, and we only address the key ingredient at the end of this section.


\subsection{Preserving the Graphical and Two-Convexity Condition} \label{sec_preserve}

Suppose that $L$ is a compact, oriented $n$-dimensional manifold, and $F_0:L\to M$ is a two-convex Lagrangian submanifold.  Consider the Lagrangian mean curvature flow $F:L\times[0,T)\to M$ with $F(\cdot,0) = F_0(\cdot)$, where $T$ is the maximal existence time.  Let
\begin{align*}
    \bar{\tau} &= \sup\{\tau\in(0,T) : \text{the flow remains two-convex in }[0,\tau) \} ~.
\end{align*}

Denote by $L_t$ the image of $L\times\{t\}$ under $F$.
Since $L$ is compact, $\log(\det\ST) \geq -\dt_2$ and $\log(*\Om) \geq -\dt_1$ for some $\dt_1,\dt_2>0$ on $L_0$.  Due to Proposition \ref{prop_lndetST}, Proposition \ref{prop_lnOm} and the maximum principle, both $\min_{L_t}\log(\det\ST)$ and $\min_{L_t}\log(*\Om)$ are non-decreasing in $t\in[0,\bar{\tau})$.  If $\bar{\tau} < T$, it follows from \eqref{ld_bound} and \eqref{convex_bound} that $L\times\{\bar{\tau}\}$ is two-convex.  Because of the openness of the two-convexity condition, this is a contradiction, and $\bar{\tau}$ must be the maximal existence time.


\subsection{Long-time Existence} \label{sec_long_time}

With \eqref{lndetST2a}, one may use the same argument as that in the proof of \cite{W02}*{Theorem A} to prove the mean curvature flow exists for all time.  It is based on Huisken's monotonicity formula \cite{Huisken90} and White's regularity theorem \cite{White05}.

Below, we present another argument based on the Liouville theorem (Theorem \ref{thm_Liouville}).  Recall that Huisken proved that if the maximal existence time $T < \infty$, then it is characterized by
\begin{align}
    \limsup_{t\to T}\max_{L_t}|A|^2 = \infty ~;
\label{sing_criterion} \end{align}
see \cite{Huisken84}*{Theorem 8.1} for the hypersurface case.

Assume \eqref{sing_criterion} for some $T < \infty$.  There exist sequences $\{t_k\}\in (0,T)$ and $\{\bx_k\}\in L$ such that
\begin{itemize}
    \item $t_k\to T$ monotonically as $k\to\infty$;
    \item $|A|^2(\bx_k,t_k) = \max\{|A|^2(x,t) : (\bx,t)\in L\times[0,t_k]\}$;
    \item $|A|^2(\bx_k,t_k)\to \infty$ monotonically as $k\to\infty$.
\end{itemize}
Denote $|A|(\bx_k,t_k)$ by $\rho_k$.  Due to \cite{STW16}*{Lemma 3.1} (see also \cite{LL92}*{Theorem 1}), the second fundamental form obeys
\begin{align}
    (\heat)|A|^2 &\leq -2|\nabla A|^2 + 3|A|^4 ~.
\label{second_fund} \end{align}
In particular, the right hand side is no greater than $3|A|^4$. By the maximum principle with the initial time $t_k$ and the criterion \eqref{sing_criterion} of Huisken, one finds that the flow exists at least for $t < t_k + \frac{1}{3\rho_k^2}$, and
\begin{align*}
    \max_{L_t}|A|^2 &\leq \left( \frac{1}{\rho_k^2} - 3(t-t_k) \right)^{-1} ~.
\end{align*}
Since $T$ is the maximal existence time, $ T-t_k \geq \frac{1}{3\rho_k^{2}}$ for all $k\in\BN$.  Identify a neighborhood of $F(\bx_k,t_k)$ with a subset of $\BR^{2n}$, and consider
\begin{align}
    \td{F}_k(\cdot,s) &= \rho_k\,\left[ F(\cdot,t_k+\frac{s}{\rho_k^2}) - F(\bx_k,t_k) \right] ~.
\label{rescale} \end{align}
The image of $\td{F}_k$ is given by the graph of $D\td{u}_k$ for some $\td{u}_k:\CU_k\subset\BR^n\times\BR\to\BR$ with $\td{u}_k(0,0) = 0$ and $D\td{u}_k(0,0)=0$.  It follows from $\rho_k\to\infty$ that any compact subset of $\BR^n\times(-\infty,c)$ is contained in $\CU_k$ for any $k>\!>1$.  By the standard blow-up argument\footnote{Since $|\ld_i|'s$ are uniformly bounded, so is $D^2\td{u}_k$.  The third order derivative $D^3\td{u}_k$ is equivalent to the second fundamental form, which is bounded.  The higher order derivatives are also bounded; see \cite{Baker10}*{Proposition 4.8}.}, $\td{u}_k$ converges to $u:\BR^n\times(-\infty,c)\to\BR$ satisfying \eqref{LMCF_potential}, and the convergence is smooth on any compact subset of $\BR^n\times(-\infty,c)$.

Note that the slope is invariant under the rescaling \eqref{rescale}, and $\ld_i$'s remain unchanged.  In particular, the eigenvalues of $D^2u$ satisfy \eqref{ld_bound} and \eqref{convex_bound1} everywhere.  Hence, Theorem \ref{thm_Liouville} implies that the graph of $Du$ is a 
special Lagrangian submanifold that satisfies the condition that for any $i,j$, $3 + 2\ld_i\ld_j \geq \dt$ over $Q$;
we conclude that the graph of $Du$ in $\BR^{2n}$, $\{(x,D u):x\in\BR^n\}$, is an affine $n$-plane by \cite{TW02}*{Corollary C}.  However, the second fundamental form of the graph of $D\td{u}_k$ has norm $1$ at $(0,0)$.  This is a contradiction.


\subsection{Convergence}

The key to conclude the convergence as $t\to\infty$ is to show that $\max_{L_t}|A|^2\to0$ as $t\to\infty$.


\subsubsection{Uniform Boundedness of $|A|^2$}
The first task is to show that $|A|^2$ is uniformly bounded.  Suppose not, then
\begin{align}
    \limsup_{t\to\infty}\max_{L_t}|A|^2 &= \infty ~.
\end{align}
With the same argument as that in section \ref{sec_long_time}, one can extract a blow-up limit, which is a non-trivial ancient solution to \eqref{LMCF_potential}.  By the same token, it contradicts to Theorem \ref{thm_Liouville}.


\subsubsection{$L^2$-Convergence of $|A|^2$} \label{sec_L2}
With the uniformly boundedness of $|A|^2$, \cite{Baker10}*{Proposition 4.8} asserts that $|\nabla^\ell A|^2$ is uniformly bounded for all $\ell\in\BN$.

Denote $\log(\det\ST)$ by $v$, whose value belongs to $[-\dt_2,0)$.  By Proposition \ref{prop_lndetST}, it obeys $(\heat)v \geq 2|A|^2$.  Denote by $\dd\mu_t$ the volume form of $L_t$.  Since $\frac{\pl}{\pl t}\dd\mu_t = -|H|^2\dd\mu_t$,
\begin{align*}
    2\int_{L_t}|A|^2\,\dd\mu_t &\leq \int_{L_t} \left[ (\heat)v - v|H|^2 \right]\dd\mu_t = \frac{\dd}{\dd t}\int_{L_t} v\,\dd\mu_t ~.
\end{align*}
This together with $|\int_{L_t}v\,\mu_t| \leq \dt_2 \vol(L_t) \leq \dt_2 \vol(L_0)$ implies that
\begin{align}
    \int_0^\infty (\int_{L_t}|A|^2\dd\mu_t)\,\dd t &< \infty ~.
\label{L2_1} \end{align}

By \eqref{second_fund} and the uniform boundedness of $|A|^2$ and $|\nabla A|^2$,
\begin{align}
    \frac{\dd}{\dd t}\int_{L_t}|A|^2\dd\mu_t &= \int_{L_t} \left[ (\heat)|A|^2 - |A|^2|H|^2 \right]\dd\mu_t \notag \\
    &\leq \int_{L_t} \left[ 3|A|^4 - 2|\nabla A|^2 - |A|^2|H|^2 \right]\dd\mu_t \leq c_1 ~. \label{L2_2}
\end{align}
According to \cite{TW20}*{Lemma 6.3}, \eqref{L2_1} and \eqref{L2_2} imply that $\int_{L_t}|A|^2\dd\mu_t\to0$ as $t\to\infty$.


\subsubsection{Convergence of the Flow} \label{sec_ptwise}
Fix $t\geq 0$; suppose that $\max_{L_t}|A|^2$ is achieved at $\bx_0$.  On a fixed size neighborhood of $\bx_0$, $L_t$ is the graph over $\pi_1(T_{\bx_0}M)$, whose higher derivatives are all bounded.  It follows that there exists a $c_2>0$ such that $\int_{L_t}|A|^2\dd\mu_t \geq c_2\max_{L_t}|A|^2$.  Therefore,
\begin{align}
    \lim_{t\to0}\max_{L_t}|A|^2 = 0 ~.
\end{align}

Since the mean curvature flow is a gradient flow and the metrics are analytic, it follows from the theorem of Simon \cite{Simon83} that the flow converges to a unique limit as $t\to\infty$.  This finishes the proof of Theorem \ref{main1}.


\subsection{About Theorem \ref{main2}}

Analogous to \eqref{def_S2}, we introduce $P^{[2]}$ to study the $2$-positivity of $F^*P$.  According to \cite{TTW22}*{Theorem 3.2}, the logarithmic determinant\footnote{$P$ is denoted as $S$ in \cite{TTW22}.} of $P^{[2]}$ obeys
\begin{align}
    (\heat)\log(\det P^{[2]}) &\geq 2|A|^2
\label{log_det_PT} \end{align}
along the mean curvature flow. In terms of $\lambda_i$, we have 
\begin{align}
    \log\det(P^{[2]})=\log \prod_{i<j} \frac{1-\lambda_i^2\lambda_j^2}{(1+\ld_i^2)(1+\ld_j^2)} ~.
\label{log_det_P} \end{align}

The proof of Theorem \ref{main2} is very similar to that of Theorem \ref{main1}, and is sketched below.  As in section \ref{sec_preserve}, denote by $T$ the maximal existence time.  Let
\begin{align*}
    \bar{\tau} &= \sup\{\tau\in(0,T) : \text{the flow remains graphical and area-decreasing in }[0,\tau) \} ~.
\end{align*}
By the maximum principle on \eqref{log_det_PT}, $\log(\det P^{[2]})$ is uniformly bounded from below.  It follows from \cite{TTW22}*{Lemma 3.3} that $L_t$ remains graphical and area-decreasing as long as the flow exists.

For the long-time existence, suppose that $T<\infty$, and perform the same blow-up argument as that in \ref{sec_long_time} to get a non-trivial ancient solution of \eqref{LMCF_potential}.  Here, we rely on \cite{W03}*{Theorem 1.1} to conclude that any entire, graphical minimal submanifold that satisfies the condition  $|\ld_i\ld_j|\leq 1-\dt$ must be an affine $n$-plane.  It is a contradiction, and thus $T=\infty$.

By a similar blow-up argument, the second fundamental form cannot tend to infinity as $t\to\infty$.  As in section \ref{sec_L2}, one deduces that $\int_{L_t}|A|^2\dd\mu_t\to 0$ as $t\to\infty$ by considering the integration of \eqref{log_det_PT} over $L_t$.  The same argument as that in section \ref{sec_ptwise} implies that $\sup_{L_t}|A|^2\to 0$ as $t\to\infty$.  Finally, one invokes the theorem of \cite{Simon83} to finish the proof.

\begin{bibdiv}
\begin{biblist}

\bib{A04}{article}{
   author={Andrews, Ben},
   title={Fully nonlinear parabolic equations in two space variables},
   journal={},
   volume={},
   date={},
   number={},
   pages={},
   eprint={arXiv:math/0402235},
   status={preprint},
}

\bib{Baker10}{book}{
   author={Baker, Charles},
   title={The mean curvature flow of submanifolds of high codimension},
   note={Thesis (Ph.D.)--Australian National University},
   date={2010},
}

\bib{CNS85}{article}{
   author={Caffarelli, L.},
   author={Nirenberg, L.},
   author={Spruck, J.},
   title={The Dirichlet problem for nonlinear second-order elliptic equations. III. Functions of the eigenvalues of the Hessian},
   journal={Acta Math.},
   volume={155},
   date={1985},
   number={3-4},
   pages={261--301},
}

\bib{CCH12}{article}{
   author={Chau, Albert},
   author={Chen, Jingyi},
   author={He, Weiyong},
   title={Lagrangian mean curvature flow for entire Lipschitz graphs},
   journal={Calc. Var. Partial Differential Equations},
   volume={44},
   date={2012},
   number={1-2},
   pages={199--220},
}

\bib{CCY13}{article}{
   author={Chau, Albert},
   author={Chen, Jingyi},
   author={Yuan, Yu},
   title={Lagrangian mean curvature flow for entire Lipschitz graphs II},
   journal={Math. Ann.},
   volume={357},
   date={2013},
   number={1},
   pages={165--183},
}

\bib{CJY20}{article}{
   author={Collins, Tristan C.},
   author={Jacob, Adam},
   author={Yau, Shing-Tung},
   title={$(1,1)$ forms with specified Lagrangian phase: a priori estimates and algebraic obstructions},
   journal={Camb. J. Math.},
   volume={8},
   date={2020},
   number={2},
   pages={407--452},
}

\bib{GZ21}{article}{
   author={Guan, Pengfei},
   author={Zhang, Xiangwen},
   title={A class of curvature type equations},
   journal={Pure Appl. Math. Q.},
   volume={17},
   date={2021},
   number={3},
   pages={865--907},
}

\bib{HL82}{article}{
   author={Harvey, Reese},
   author={Lawson, H. Blaine, Jr.},
   title={Calibrated geometries},
   journal={Acta Math.},
   volume={148},
   date={1982},
   pages={47--157},
}

\bib{Huisken84}{article}{
   author={Huisken, Gerhard},
   title={Flow by mean curvature of convex surfaces into spheres},
   journal={J. Differential Geom.},
   volume={20},
   date={1984},
   number={1},
   pages={237--266},
}

 \bib{Huisken90}{article}{
   author={Huisken, Gerhard},
   title={Asymptotic behavior for singularities of the mean curvature flow},
   journal={J. Differential Geom.},
   volume={31},
   date={1990},
   number={1},
   pages={285--299},
}

\bib{JY17}{article}{
   author={Jacob, Adam},
   author={Yau, Shing-Tung},
   title={A special Lagrangian type equation for holomorphic line bundles},
   journal={Math. Ann.},
   volume={369},
   date={2017},
   number={1-2},
   pages={869--898},
}

\bib{K87}{book}{
   author={Krylov, N. V.},
   title={Nonlinear elliptic and parabolic equations of the second order},
   series={Mathematics and its Applications (Soviet Series)},
   volume={7},
   publisher={D. Reidel Publishing Co., Dordrecht},
   date={1987},
   pages={xiv+462},
}

\bib{LLS21}{article}{
   author={Lambert, Ben},
   author={Lotay, Jason D.},
   author={Schulze, Felix},
   title={Ancient solutions in Lagrangian mean curvature flow},
   journal={Ann. Sc. Norm. Super. Pisa Cl. Sci. (5)},
   volume={22},
   date={2021},
   number={3},
   pages={1169--1205},
}

\bib{LL92}{article}{
   author={Li, An-Min},
   author={Li, Jimin},
   title={An intrinsic rigidity theorem for minimal submanifolds in a
   sphere},
   journal={Arch. Math. (Basel)},
   volume={58},
   date={1992},
   number={6},
   pages={582--594},
}

\bib{NY11}{article}{
    author={Nguyen, Tu A.},
    author={Yuan, Yu},
    title={A priori estimates for Lagrangian mean curvature flows},
    journal={Int. Math. Res. Not. IMRN},
    date={2011},
    number={19},
    pages={4376--4383},
}

\bib{Simon83}{article}{
   author={Simon, Leon},
   title={Asymptotics for a class of nonlinear evolution equations, with
   applications to geometric problems},
   journal={Ann. of Math. (2)},
   volume={118},
   date={1983},
   number={3},
   pages={525--571},
}

\bib{Smoczyk96}{article}{
   author={Smoczyk, Knut},
   title={A canonical way to deform a Lagrangian submanifold},
   journal={},
   volume={},
   date={},
   number={},
   pages={},
   eprint={arXiv:dg-ga/9605005},
   status={preprint},
}

\bib{STW16}{article}{
   author={Smoczyk, Knut},
   author={Tsui, Mao-Pei},
   author={Wang, Mu-Tao},
   title={Curvature decay estimates of graphical mean curvature flow in higher codimensions},
   journal={Trans. Amer. Math. Soc.},
   volume={368},
   date={2016},
   number={11},
   pages={7763--7775},
}

\bib{SW02}{article}{
   author={Smoczyk, Knut},
   author={Wang, Mu-Tao},
   title={Mean curvature flows of Lagrangians submanifolds with convex potentials},
   journal={J. Differential Geom.},
   volume={62},
   date={2002},
   number={2},
   pages={243--257},
}

\bib{TTW22}{article}{
   author={Tsai, Chung-Jun},
   author={Tsui, Mao-Pei},
   author={Wang, Mu-Tao},
   title={A new monotone quantity in mean curvature flow implying sharp homotopic criteria},
   journal={},
   volume={},
   date={},
   number={},
   pages={},
   eprint={arXiv:math/2301.09222},
   status={preprint},
}

\bib{TW20}{article}{
   author={Tsai, Chung-Jun},
   author={Wang, Mu-Tao},
   title={A strong stability condition on minimal submanifolds and its implications},
   journal={J. Reine Angew. Math.},
   volume={764},
   date={2020},
   pages={111--156},
}

\bib{TW02}{article}{
   author={Tsui, Mao-Pei},
   author={Wang, Mu-Tao},
   title={A Bernstein type result for special Lagrangian submanifolds},
   journal={Math. Res. Lett.},
   volume={9},
   date={2002},
   number={4},
   pages={529--535},
}

\bib{TW04}{article}{
   author={Tsui, Mao-Pei},
   author={Wang, Mu-Tao},
   title={Mean curvature flows and isotopy of maps between spheres},
   journal={Comm. Pure Appl. Math.},
   volume={57},
   date={2004},
   number={8},
   pages={1110--1126},
}

\bib{WY14}{article}{
   author={Wang, Dake},
   author={Yuan, Yu},
   title={Hessian estimates for special Lagrangian equations with critical
   and supercritical phases in general dimensions},
   journal={Amer. J. Math.},
   volume={136},
   date={2014},
   number={2},
   pages={481--499},
}

\bib{W01}{article}{
   author={Wang, Mu-Tao},
   title={Mean curvature flow of surfaces in Einstein four-manifolds},
   journal={J. Differential Geom.},
   volume={57},
   date={2001},
   number={2},
   pages={301--338},
}

\bib{W02}{article}{
   author={Wang, Mu-Tao},
   title={Long-time existence and convergence of graphic mean curvature flow in arbitrary codimension},
   journal={Invent. Math.},
   volume={148},
   date={2002},
   number={3},
   pages={525--543},
}

\bib{W03}{article}{
   author={Wang, Mu-Tao},
   title={On graphic Bernstein type results in higher codimension},
   journal={Trans. Amer. Math. Soc.},
   volume={355},
   date={2003},
   number={1},
   pages={265--271},
}

\bib{W03a}{article}{
   author={Wang, Mu-Tao},
   title={Gauss maps of the mean curvature flow},
   journal={Math. Res. Lett.},
   volume={10},
   date={2003},
   number={2--3},
   pages={287--299},
}

\bib{White05}{article}{ 
   author={White, Brian},
   title={A local regularity theorem for mean curvature flow},
   journal={Ann. of Math. (2)},
   volume={161},
   date={2005},
   number={3},
   pages={1487--1519},
}

\bib{Y02}{article}{
    author={Yuan, Yu},
    title={A Bernstein problem for special Lagrangian equations},
    journal={Invent. Math.},
    volume={150}, 
    date={2002}, 
    number={1}, 
    pages={117–-125},
}

\end{biblist}
\end{bibdiv}

\end{document}